\newtheorem{thm}{Theorem}[section]
\newtheorem{lem}[thm]{Lemma}
\newtheorem{prop}[thm]{Proposition}
\newtheorem{cor}[thm]{Corollary}
\newtheorem{clm}[thm]{Claim}
\newtheorem{ques}[thm]{Question}
\numberwithin{equation}{section}
\begin{document}

\title{On induced saturation for paths}

\author{Eun-Kyung Cho\thanks{
Supported by the Basic Science Research Program through the National Research Foundation of Korea (NRF) funded by the Ministry of Education (NRF-2018R1D1A1B07043049) and the Basic Science Research Program through the National Research Foundation of Korea (NRF) funded by the Ministry of Science, ICT and Future Planning (NRF-2018R1C1B6003577).
Department of Mathematics, Hankuk University of Foreign Studies, Yongin-si, Gyeonggi-do, Republic of Korea. \texttt{ourearth3@gmail.com}
}
\and Ilkyoo Choi\thanks{Supported by the Basic Science Research Program through the National Research Foundation of Korea (NRF) funded by the Ministry of Education (NRF-2018R1D1A1B07043049), and also by the Hankuk University of Foreign Studies Research Fund.
Department of Mathematics, Hankuk University of Foreign Studies, Yongin-si, Gyeonggi-do, Republic of Korea.
\texttt{ilkyoo@hufs.ac.kr}
}
\and Boram Park\thanks{Supported by Basic Science Research Program through the National Research Foundation of Korea (NRF) funded by the Ministry of Science, ICT and Future Planning (NRF-2018R1C1B6003577).
Department of Mathematics, Ajou University, Suwon-si, Gyeonggi-do, Republic of Korea.
\texttt{borampark@ajou.ac.kr}
}
}


\date\today
\maketitle

\begin{abstract}
For a graph $H$, a graph $G$ is {\it $H$-induced-saturated} if $G$ does not contain an induced copy of $H$, but either removing an edge from $G$ or adding a non-edge to $G$ creates an induced copy of $H$.
Depending on the graph $H$, an $H$-induced-saturated graph does not necessarily exist.
In fact, Martin and Smith~\cite{MS2012} showed that $P_4$-induced-saturated graphs do not exist, where $P_k$ denotes a path on $k$ vertices.
Axenovich and Csik\'{o}s~\cite{AC2019} asked the existence of $P_k$-induced-saturated graphs for $k \ge 5$;
it is easy to construct such graphs when $k\in\{2, 3\}$.
Recently, R\"{a}ty~\cite{Raty_unpub} constructed a graph that is $P_6$-induced-saturated.

In this paper, we show that there exists a $P_{k}$-induced-saturated graph for infinitely many values of $k$.
To be precise, we find a $P_{3n}$-induced-saturated graph for every positive integer $n$.
As a consequence, for each positive integer $n$, we construct infinitely many $P_{3n}$-induced-saturated graphs.
We also show that the Kneser graph $K(n,2)$ is $P_6$-induced-saturated for every $n\ge 5$.
\end{abstract}

\section{Introduction}\label{sec:intro}

We consider only finite simple graphs.
Given a graph $G$, let $V(G)$ and $E(G)$ denote the vertex set and the edge set, respectively, of $G$.
A {\it non-edge} of $G$ is an unordered pair of vertices not in $E(G)$.
For a graph $G$ and an edge $e$ (resp. non-edge $e$) of $G$, let $G-e$ (resp. $G+e$) denote the graph obtained by removing $e$ from $G$ (resp. adding $e$ to $G$).
For a vertex $ v \in V(G)$, let $N_G(v)$ denote the set of neighbors of $v$, and let $N_G[v] = N_G(v) \cup \{v\}$.
We use $K_n$, $C_n$, and $P_n$ to denote the complete graph, a cycle, and a path, respectively, on $n$ vertices.

For a graph $H$, a graph $G$ is {\it $H$-saturated} if $G$ has no subgraph isomorphic to $H$,
but adding a non-edge to $G$ creates a subgraph isomorphic to $H$.
Since an $H$-saturated graph on $n$ vertices always exists for a particular graph $H$ when $n\ge|V(H)|$, it is natural to ask for the maximum number of edges of an $H$-saturated graph on $n$ vertices.
This gave birth to a central topic in what we now call extremal graph theory.
We hit only the highlights here.

For a graph $H$, the maximum number of edges of an $H$-saturated graph on $n$ vertices is known as the {\it Tur\'an number}  of $H$.
The Tur\'an number of an arbitrary complete graph is determined and is known as Tur\'{a}n's Theorem~\cite{Turan1941,Turan1954}, which extended the work of Mantel~\cite{Mantel1907} in 1907.
The remarkable Erd\H{o}s-Stone-Simonovits Theorem \cite{ES1946,ES1966}
resolved the Tur\'an number asymptotically for an arbitrary graph $H$ as long as $H$ is non-bipartite.
Investigating Tur\'an numbers of bipartite graphs remains to be an important, yet difficult open question; see~\cite{FS2013} for a thorough survey.  At the other extreme, the {\it saturation number} of a graph $H$ is the minimum number of edges of an $H$-saturated graph on $n$ vertices. The saturation number was introduced by Erd\H{o}s, Hajnal, and Moon in \cite{EHM1964}, where the saturation number of an arbitrary complete graph was determined.
Moreover, it is known that all saturation numbers are at most linear in $n$; see the excellent dynamic survey~\cite{FFS2011} by Faudree, Faudree, and Schmitt for more results and open questions regarding saturation numbers.

A natural generalization of the extremal questions above is to change the containment relation to induced subgraphs.
Utilizing the notion of trigraphs, Pr\"{o}mel and Steger~\cite{PS1993} and Martin and Smith~\cite{MS2012} defined the induced subgraph version of the Tur\'an number and the saturation number, respectively, of a given graph.
We omit the exact definitions here, see \cite{FFS2011,LTTZ2018,BESYY2016, EGM2019} for more details and other recent work on Tur\'an-type problems concerning induced subgraphs.

Recently, in 2019, Axenovich and Csik\'{o}s~\cite{AC2019} introduced the notion of an $H$-induced-saturated graph, whose definition emphasizes that a non-edge is as important as an edge when considering induced subgraphs.
For a graph $H$, a graph $G$ is {\it $H$-induced-saturated} if $G$ does not contain an induced copy of $H$, but either removing an edge from $G$ or adding a non-edge  to $G$ creates an induced copy of $H$.
In contrast to the fact that an $H$-saturated graph always exists for an arbitrary graph $H$, it is not always the case that an $H$-induced-saturated graph exists.
For example, a $K_n$-induced-saturated graph cannot exist when $n\geq 3$.
This is because removing an edge from a graph without $K_n$ as a (induced) subgraph cannot create a copy of $K_n$. 
It is also known that a graph $H$ with induced-saturation number zero guarantees the existence of an $H$-induced-saturated graph.
Behrens et al.~\cite{BESYY2016} studied graphs $H$ with induced-saturation number zero. 
In particular, they revealed that an $H$-induced-saturated graph exists when $H$ is a star, a matching, a 4-cycle, or an odd cycle of length at least 5.

Let us now focus our attention to paths.
We can easily see that the empty graph on at least two vertices is $P_2$-induced-saturated, and the disjoint union of complete graphs in which each component has at least three vertices
is $P_3$-induced-saturated.
Yet, a result by Martin and Smith~\cite{MS2012} implies that there is no $P_4$-induced-saturated graph. Recently, a simple proof of the aforementioned result was provided by Axenovich and Csik\'{o}s~\cite{AC2019}, where they showed various results regarding induced saturation of a certain family of trees, which unfortunately does not include paths.
In particular, they pointed out that it is unknown whether a $P_k$-induced-saturated graph exists when $k\geq 5$.

\begin{ques}[\cite{AC2019}]\label{ques:path}
Does a $P_k$-induced-saturated graph exist when $k\geq 5$?
\end{ques}

Very recently, R\"{a}ty~\cite{Raty_unpub} answered Question~\ref{ques:path} in the affirmative for $k=6$ by using an algebraic construction to generate a graph on 16 vertices that is $P_6$-induced-saturated.
The author indicated that the construction gives no idea for longer paths,
yet, we were able to generalize his construction for infinitely many longer paths.
Our main result answers Question~\ref{ques:path} in the affirmative for all $k$ that is a multiple of 3.
We now state our main theorem:

\begin{thm}\label{thm:P3n}
For every positive integer $n$, there exists a $P_{3n}$-induced-saturated graph.
\end{thm}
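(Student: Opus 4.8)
The plan is to produce, for each positive integer $n$, an explicit graph $G_n$ and to verify the three defining conditions of a $P_{3n}$-induced-saturated graph directly, using symmetry to keep the casework finite. Guided by R\"aty's $16$-vertex example, which realizes the case $n=2$, I would search for $G_n$ inside a highly symmetric algebraic family---a Cayley graph on an abelian group, or a circulant on $\mathbb{Z}_N$ with a carefully chosen symmetric connection set $S$. Transitivity is the organizing principle: if $\mathrm{Aut}(G_n)$ acts transitively on the edges, then the condition ``$G_n-e$ contains an induced $P_{3n}$ for every edge $e$'' collapses to checking a single representative edge, and likewise adding a non-edge reduces to one representative if the action on non-edges is transitive (or has few orbits). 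Thus the whole problem reduces to (i) proving that $G_n$ itself has no induced $P_{3n}$, and (ii) exhibiting one induced $P_{3n}$ after deleting a fixed edge and one after adding a fixed non-edge.

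First I would fix the connection set $S$ so that the length of a longest induced path is controlled by $n$. Concretely, I would analyze induced paths combinatorially: record the consecutive steps $v_{i+1}-v_i$ of an induced path $v_1\cdots v_\ell$ as elements of $S$, and translate the induced non-adjacency of each pair of non-consecutive vertices into the condition that the corresponding partial sum lies outside $S\cup\{0\}$. The target is a clean structural lemma asserting that every induced path in $G_n$ has at most $3n-1$ vertices, so that $G_n$ is induced-$P_{3n}$-free. This non-existence statement is the crux of the argument and the step I expect to be hardest, because it is a global constraint: one must rule out \emph{every} induced path on $3n$ vertices rather than merely one, and the bound $3n-1$ must be exactly tight, since the local modifications in the next step are supposed to push the length up by precisely one.

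For the remaining two conditions I would work with a single representative edge $e$ and a single representative non-edge $f$, as permitted by transitivity. Deleting $e$ removes a short-cutting adjacency, and I would explicitly list a sequence of $3n$ vertices that becomes an induced path exactly once $e$ is gone; dually, adding $f$ creates a new adjacency bridging two induced subpaths into an induced $P_{3n}$. In each case the verification is a finite check that the listed vertices form a path with no chord, which the explicit connection set renders routine. Finally, to obtain the stated corollary of infinitely many examples for each fixed $n$, I would build a free parameter into the construction---for instance enlarging the underlying group while preserving the connection-set pattern---and re-run the same three verifications, rather than relying on disjoint unions, which fail the non-edge condition across components.
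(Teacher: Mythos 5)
Your proposal is a strategy outline that matches the paper's architecture at a high level---construct a symmetric graph, use transitivity to cut the casework, prove a tight $3n-1$ upper bound on induced paths, then exhibit explicit paths after one deletion and one addition---but it contains two genuine gaps that prevent it from being a proof. First, no construction is actually given: you say you ``would search for $G_n$ inside a highly symmetric algebraic family,'' but the existence of a connection set with the required properties is precisely the content of the theorem, and nothing in your partial-sum formalism guarantees such a set exists for every $n$. The paper's graph is a specific $5$-regular graph on $\mathbb{Z}_2\times\mathbb{Z}_2\times\mathbb{Z}_{2n}$ (generalizing R\"aty's $16$-vertex example, which is $Q_4$ with diagonals), and it is only vertex-transitive, not edge-transitive: the stabilizer of a vertex acts transitively on only four of its five neighbors, so even the ``finite check'' stage requires two edge representatives and five classes of non-edges, not one of each. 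Second, and more seriously, you correctly identify the non-existence of an induced $P_{3n}$ as the crux but offer no mechanism for proving it. The paper's proof is an induction on $n$: a minimal counterexample is analyzed by deleting the closed neighborhoods of the first one or two path vertices and embedding the remaining unshaded region into $G_{n-1}$, where the induction hypothesis forbids an induced $P_{3n-3}$. This requires the family $\{G_n\}$ to be constructed coherently so that carving out a vertex's neighborhood leaves (an induced subgraph of) the previous member of the family---a structural feature that a generic ``circulant with connection set $S$'' ansatz does not provide and that your partial-sum bookkeeping does not substitute for.

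One smaller point: your stated reason for avoiding disjoint unions in the corollary is mistaken. The paper obtains infinitely many examples exactly by taking disjoint copies of $G_n$; adding a non-edge between two components joins two induced paths on $3n-1$ vertices (rooted at the new edge's endpoints via vertex-transitivity) into an induced path on $6n-2$ vertices, which contains an induced $P_{3n}$. The non-edge condition does not fail across components.
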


In fact, using the graph construction in Theorem~\ref{thm:P3n}, we can construct an infinite family of $P_{3n}$-induced-saturated graphs for each positive integer $n$.

\begin{cor}\label{cor:P3n}
For every positive integer $n$, there exist infinitely many $P_{3n}$-induced-saturated graphs.
\end{cor}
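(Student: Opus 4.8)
The plan is to feed the explicit graph produced in the proof of Theorem~\ref{thm:P3n} into a disjoint-union operation. Fix $n$ and let $G$ be a $P_{3n}$-induced-saturated graph as guaranteed by Theorem~\ref{thm:P3n}. For each integer $t\ge 1$, let $tG$ denote the disjoint union of $t$ copies $G_1,\dots,G_t$ of $G$. The claim is that every $tG$ is again $P_{3n}$-induced-saturated. Since $|V(tG)| = t\,|V(G)|$ takes infinitely many distinct values, the graphs $\{tG\}_{t\ge 1}$ are pairwise non-isomorphic, which then yields the desired infinite family.

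Next I would dispatch the three easy parts of the definition, all of which rest on the fact that $P_{3n}$ is connected and hence any induced copy must lie inside a single component. First, $tG$ contains no induced $P_{3n}$, because such a copy would sit inside some $G_i\cong G$, contradicting the choice of $G$. Second, deleting an edge $e$ of $tG$ deletes an edge of the component $G_i$ containing it; since $G_i-e$ already contains an induced $P_{3n}$, and the remaining components contribute no new edges, this copy survives as an induced $P_{3n}$ in $tG-e$. Third, the same reasoning handles a non-edge $e$ whose two ends lie in the same component $G_i$: here $G_i+e$ contains an induced $P_{3n}$, which is still induced in $(tG)+e$.

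The main obstacle is the remaining case, a non-edge $uv$ with $u\in V(G_i)$ and $v\in V(G_j)$ for $i\ne j$; adding it must create an induced $P_{3n}$, yet this edge is a genuinely new bridge between two components and is not controlled by the saturation of $G$ alone. The clean sufficient condition I would aim for is the structural property that \emph{every vertex of $G$ is an endpoint of an induced path on $3n-1$ vertices}. Granting this, take an induced $P_{3n-1}$ inside $G_i$ with endpoint $u$ and append $v$ along the new edge $uv$: the vertex $v$ has no neighbour among the other $3n-1$ vertices (they lie in $G_i$, while $v\in G_j$), so the result is an induced $P_{3n}$ in $(tG)+uv$.

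Establishing this structural property is where I expect the real work to lie, and I would extract it directly from the explicit construction underlying Theorem~\ref{thm:P3n} rather than from the abstract saturation hypothesis; indeed a generic $P_{3n}$-induced-saturated graph need not have every vertex at the end of such a long induced path. If the construction is vertex-transitive, as is typical for the Cayley- or Kneser-type graphs in this area, this reduces to exhibiting a single induced $P_{3n-1}$ with a prescribed endpoint, after which transitivity propagates it to all vertices. Alternatively, if the construction of Theorem~\ref{thm:P3n} already carries a free size parameter, one may instead let that parameter range over infinitely many values to obtain the family directly; the disjoint-union route has the advantage of working from a single base graph.
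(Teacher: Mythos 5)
Your proposal is correct and follows essentially the same route as the paper: take disjoint unions of the base graph $G_n$, handle the within-component cases by saturation of $G_n$, and handle a cross-component non-edge using vertex-transitivity together with an induced $P_{3n-1}$ ending at a prescribed vertex (the paper extends on both sides to get an induced $P_{6n-2}$, you extend on one side to get a $P_{3n}$ directly, an immaterial difference). The structural property you flag as the ``real work'' is exactly what the paper supplies via Lemma~\ref{lem:ver:trans} and the explicit induced $P_{3n-1}$ with endpoint $(00,0)$.
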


We also note that the Kneser graph $K(n,2)$ is $P_6$-induced-saturated for every $n\ge 5$.
Recall that the Kneser graph $K(n,r)$ is a graph whose vertices are the $r$-subsets of $\{1,2,\ldots,n\}$, and two vertices are adjacent if and only if the corresponding $r$-subsets are disjoint.

\begin{thm}\label{thm:P6:Kneser}
For every integer $n\ge 5$, the Kneser graph $K(n,2)$ is $P_6$-induced-saturated.
\end{thm}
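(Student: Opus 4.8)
The plan is to verify the three defining conditions separately: (i) $K(n,2)$ has no induced $P_6$; (ii) deleting any edge creates an induced $P_6$; and (iii) adding any non-edge creates an induced $P_6$. Throughout I identify each vertex with the $2$-subset it represents, so that two distinct vertices are adjacent precisely when the corresponding sets are disjoint, and non-adjacent precisely when they meet in exactly one element. With this dictionary every adjacency question becomes an intersection question about $2$-sets.

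For (i) I would argue by contradiction. Suppose $v_1v_2v_3v_4v_5v_6$ is an induced $P_6$ and write $v_1=\{a,b\}$. Since $v_1$ is non-adjacent to each of $v_3,v_4,v_5,v_6$, every one of these meets $\{a,b\}$; none can contain both $a$ and $b$, as that would force equality with $v_1$, so each of $v_3,\dots,v_6$ carries a well-defined label in $\{a,b\}$ recording which of the two it contains. Consecutive path vertices are disjoint, so along the subpath $v_3v_4v_5v_6$ these labels must alternate, which (after possibly swapping $a$ and $b$) gives $v_3=\{a,c\}$, $v_4=\{b,e\}$, $v_5=\{a,d\}$, and $v_6=\{b,f\}$. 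Now the non-adjacency of $v_3$ and $v_6$ forces $c=f$, so $v_6=\{b,c\}$; but $v_2$ is disjoint from both $v_1=\{a,b\}$ and $v_3=\{a,c\}$, hence avoids $a,b,c$ altogether and is therefore disjoint from $v_6=\{b,c\}$ — contradicting that $v_2$ and $v_6$ are non-adjacent and hence must meet. This argument is uniform in $n$, and I expect it to be the conceptual heart of the theorem.

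For (ii) and (iii) I would first record that the natural $S_n$-action makes $K(n,2)$ both edge-transitive and non-edge-transitive: an edge is an unordered pair of disjoint $2$-sets (four labelled points) while a non-edge is an unordered pair of $2$-sets sharing exactly one point (three labelled points), and for $n\ge 5$ the symmetric group acts transitively on each type of configuration. Thus it suffices to treat a single representative edge, joining $\{1,2\}$ and $\{3,4\}$, and a single representative non-edge, joining $\{1,2\}$ and $\{1,3\}$. Since both representatives use only ground elements from $\{1,\dots,5\}$, and $K(5,2)$ (the Petersen graph) occurs as the induced subgraph of $K(n,2)$ on the $2$-subsets of $\{1,\dots,5\}$, it is enough to produce the required induced $P_6$ inside this fixed copy; adjacencies among such vertices are unaffected by the rest of $K(n,2)$, and a single pair of examples then settles all $n\ge 5$ at once. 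Concretely I would check that $\{4,5\},\{1,2\},\{3,5\},\{1,4\},\{2,5\},\{3,4\}$ is an induced $P_6$ after deleting the edge $\{1,2\}\{3,4\}$ (its unique disjoint non-consecutive pair being exactly that deleted edge), and that $\{1,2\},\{1,3\},\{2,4\},\{1,5\},\{2,3\},\{1,4\}$ is an induced $P_6$ after adding the edge $\{1,2\}\{1,3\}$ (realised as the first path-edge, with every non-consecutive pair meeting).

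The verification of these two six-vertex gadgets is a finite adjacency check and is the routine part. The main obstacle is two-fold. First, one must locate the gadgets so that they live entirely on the five ground elements of the representative edge and non-edge, since this is precisely what lets transitivity reduce the infinitely many cases to two explicit examples. Second, one must push the counting in step (i) to the clean contradiction above rather than an unwieldy case analysis; the decisive simplification is the observation that each of $v_3,\dots,v_6$ meets $v_1$ in a single forced element, which rigidifies the whole configuration and makes the final clash between $v_2$ and $v_6$ inevitable.
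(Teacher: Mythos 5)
Your proposal is correct and follows essentially the same route as the paper: rule out an induced $P_6$ by a short forcing argument (your label-alternation phrasing is a cosmetic repackaging of the paper's concrete deduction that $v_6$ is forced to coincide with $v_4$), then use transitivity on edges and on non-edges to reduce to one representative of each, and exhibit explicit six-vertex induced paths on the ground set $\{1,\dots,5\}$. Your two gadget paths differ from the paper's but check out, so nothing further is needed.
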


In Section~\ref{sec:proof:P3n}, we present the proofs of Theorem~\ref{thm:P3n} and Corollary~\ref{cor:P3n}.
The section is divided into subsections to improve readability.
In Section~\ref{sec:proof:Kneser}, we prove Theorem~\ref{thm:P6:Kneser}.
We end the paper with some questions and further research directions in Section~\ref{sec:rmk}.
We also have the appendix where we provide proofs of some miscellaneous facts.

\section{Proofs of Theorem~\ref{thm:P3n} and Corollary~\ref{cor:P3n}}
\label{sec:proof:P3n}

In this section, we prove Theorem~\ref{thm:P3n} and Corollary~\ref{cor:P3n}.
Note that it is sufficient to prove both statements for $n\geq 2$, since the disjoint union of complete graphs in which each component has at least three vertices gives a family of infinitely many $P_3$-induced-saturated graphs.
We first prove Corollary~\ref{cor:P3n} using the graph $G_n$ in the proof of Theorem~\ref{thm:P3n}.

\begin{proof}[Proof of Corollary~\ref{cor:P3n}]
Let $G_n$ be a $P_{3n}$-induced-saturated graph in the proof of Theorem~\ref{thm:P3n}.
By Lemma~\ref{lem:ver:trans} and Proposition~\ref{prop:no_path_3n},
$G_n$ is vertex-transitive and has an induced path on $3n-1$ vertices.
For $k\ge 1$, let $H_{n,k}$ be a graph with $k$ components where each component is isomorphic to $G_n$.
Thus each component of $H_{n,k}$ is $P_{3n}$-induced-saturated, vertex-transitive, and has an induced path on $3n-1$ vertices.

Since each component is $P_{3n}$-induced-saturated and removing an edge from $H_{n,k}$ is removing an edge from a component of $H_{n,k}$,
removing an edge from $H_{n,k}$ creates an induced copy of $P_{3n}$.
By the same reason, adding a non-edge joining two vertices in a component of $H_{n,k}$ creates an induced copy of $P_{3n}$.
Adding a non-edge joining two vertices in different components of $H_{n,k}$ creates an induced copy of $P_{6n-2}$, since each component of $H_{n,k}$ is vertex-transitive and contains an induced path on $3n-1$ vertices. Hence,  $H_{n, k}$ is  $P_{3n}$-induced-saturated. \end{proof}

The proof of Theorem~\ref{thm:P3n} is split into three subsections.
We will first construct a graph $G_n$ and lay out some important properties of $G_n$ 
in Subsection~\ref{subsec:construction}.
In Subsection~\ref{subsec:proof:noP3n}, we prove that $G_n$ does not contain an induced path on $3n$ vertices.
In Subsection~\ref{subsec:pf:P3n}, we prove that the graph obtained by either adding an arbitrary non-edge to $G_n$ or removing an arbitrary edge of $G_n$ contains an induced path on $3n$ vertices.
We conclude that $G_n$ is a $P_{3n}$-induced-saturated graph.

\subsection{The construction of $G_n$ and its automorphisms}\label{subsec:construction}

Let $\mathbb{Z}_{2n}=\{0,1,\ldots,2n-1\}$, so that $\mathbb{Z}_2=\{0,1\}$.
We will always use either 0 or 1 for an element of $\mathbb{Z}_2$.
On the other hand, we will use any integer to denote an element of $\mathbb{Z}_{2n}$.
Given an element $a\in \mathbb{Z}_{2}$, let $\bar{a}$ denote the element of $\mathbb{Z}_2$ satisfying $a+\bar{a}=1$.
For simplicity, we use $(ab,j)$ to denote the element $(a,b,j)\in \mathbb{Z}_2\times  \mathbb{Z}_2\times  \mathbb{Z}_{2n}$.

For each $n\ge 2$, define a graph $G_n$ as follows.
The vertex set of $G_n$ is $\mathbb{Z}_2\times  \mathbb{Z}_2\times  \mathbb{Z}_{2n}$,
and the neighborhood of each vertex $(ab, j)\in\mathbb{Z}_2\times  \mathbb{Z}_2\times  \mathbb{Z}_{2n}$  is exactly the following set:
\[\{  (\bar{a}\bar{b},j), (a\bar{b},j), (ac,j-1), (ac,j+1), (\bar{a}c,j+2(-1)^a)  \},\]
where $c=a+b \in \mathbb{Z}_2$.
Note that we always set $a,b\in\{0,1\}$, and so the last vertex $(\bar{a}c,j+2(-1)^a)$ is either  $(\bar{a}c,j+2)$ or $(\bar{a}c,j-2)$.

We observe that $G_2$ is exactly the graph constructed by R\"{a}ty in~\cite{Raty_unpub}, see the appendix for details.
See the left figure of Figure~\ref{fig:ex} for an illustration of $G_3$.
Regarding figures representing $G_n$, we will use dots in a 2-dimensional square array to denote the vertex set of  $G_n$.
See the right figure of Figure~\ref{fig:ex} for an illustration.

\begin{figure}[h!]
  \centering
  \includegraphics[height=3cm,page=1]{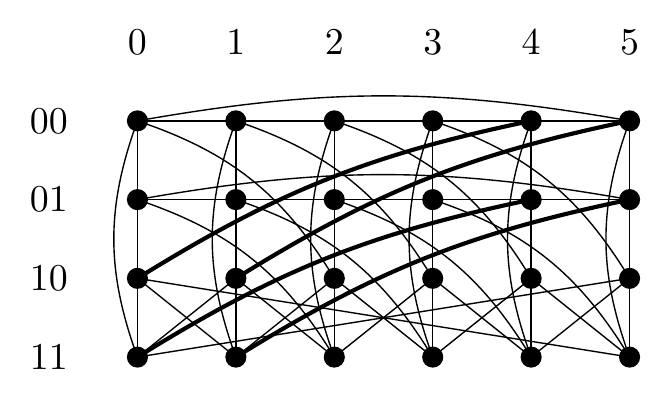}
  \quad
  \includegraphics[height=3cm,page=2]{n3+frame.pdf}
  \caption{The graph $G_3$, and a simplified representation of the vertex set of $G_n$.}
  \label{fig:ex}
\end{figure}

Now we reveal some structural properties of $G_n$. For simplicity, we will omit the parentheses of a vertex $(ab,j)$ if doing so improves the readability.
For instance, we use $N_{G_n}(ab,j)$ and $f_n(ab,j)$ instead of $N_{G_n}((ab,j))$ and $f_n((ab,j))$, respectively.

Define the following functions from $V(G_n)$ to $V(G_n)$ such that for $(ab,j)\in V(G_n)$,
\begin{eqnarray*}
f_n(ab,j)&=&(ab,j+1)\\
p_n(ab,j)&=&\begin{cases}(\bar{a}b,-j-1) &\text{if }j\equiv 0\pmod{2}\\
 (\bar{a} \bar{b},-j-1) &\text{if }j\equiv 1\pmod{2}\end{cases}\\
q_n(ab,j)&=& (ac,-j+2a), \mbox{ where } c=a+b.
\end{eqnarray*}
In the following, if there is no confusion, we denote  $f_n$, $p_n$, and $q_n$ by  $f$, $p$, and $q$, respectively. It is clear that $f$ is a bijection by definition. Since $q\circ q$ is the identity function, $q$ is also a bijection.
Moreover, $p$ is a bijection, since it has an inverse:
$$p^{-1}(ab,j)=\begin{cases}(\bar{a}\bar{b},-j-1) &\text{if }j\equiv 0\pmod{2}\\
(\bar{a} b,-j-1) &\text{if }j\equiv 1\pmod{2}.\end{cases}$$

\begin{lem}\label{lem:auto}
For $n\ge 2$, the functions  $f$, $p$, and $q$ are automorphisms of $G_n$.
\end{lem}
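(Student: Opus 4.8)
The plan is to use the elementary fact that a bijection of the vertex set of a finite graph that sends edges to edges is automatically an automorphism: such a map induces an injection of the finite edge set into itself, hence a bijection of that set, so its inverse also sends edges to edges. Since $f$, $p$, and $q$ are already known to be bijections of $V(G_n)$, it therefore suffices to check, for each of the three maps $\phi$ and each vertex $v=(ab,j)$, that every neighbor of $v$ is sent by $\phi$ to a neighbor of $\phi(v)$; equivalently, that $\phi(N_{G_n}(v))\subseteq N_{G_n}(\phi(v))$. This reduces the whole lemma to a finite, structured computation with the five defining neighbor rules.

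First I would dispose of $f$, which is the easy case. Since $f$ only increments the $\mathbb{Z}_{2n}$-coordinate and leaves the $\mathbb{Z}_2\times\mathbb{Z}_2$-part untouched, and since each of the five neighbor rules depends on $j$ only through the additive shifts $j$, $j\pm 1$, and $j+2(-1)^a$ (with the $\mathbb{Z}_2$-data of the neighbor independent of $j$), applying $f$ commutes with each rule termwise. Concretely, the five neighbors of $f(ab,j)=(ab,j+1)$ are exactly the $f$-images of the five neighbors of $(ab,j)$, so $f(N_{G_n}(v))=N_{G_n}(f(v))$ on the nose.

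Next I would handle $q$ and then $p$. For $q(ab,j)=(a,a+b,-j+2a)$, note that $q$ fixes the first coordinate $a$, interchanges the roles of $b$ and $c=a+b$ in the second coordinate, and reflects the layer via $j\mapsto -j+2a$; the identity $q\circ q=\mathrm{id}$ already reflects this involutive structure. The verification amounts to checking that this reflection-and-swap carries each of the five neighbor rules of $v$ to one of the five rules at $q(v)$: the two rules giving $(ac,j\pm 1)$ are sent to the corresponding pair, the two same-layer rules giving $(\bar a\bar b,j)$ and $(a\bar b,j)$ are permuted among themselves, and the crucial point is that the reflection is compatible with the fifth rule's signed jump $j+2(-1)^a$ — this is exactly what the correction term $+2a$ in the definition of $q$ is calibrated to guarantee. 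For $p$, whose definition is split according to the parity of $j$, I would run the analysis in the two cases $j\equiv 0$ and $j\equiv 1\pmod 2$ separately. In each case $p$ flips $a\mapsto\bar a$, reflects $j\mapsto -j-1$ (which switches the two parity classes, so the branch of $p$ applied to a given neighbor may differ from the branch applied to $v$ itself), and flips $b$ in a parity-dependent way; one then tracks the image of each of the five neighbors and matches it against the five neighbor rules of $p(v)$.

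The main obstacle is the fifth neighbor rule, $(\bar a c,j+2(-1)^a)$, whose $j$-increment carries a sign depending on $a$. Since both $p$ and $q$ alter $a$ and reflect $j$, the bookkeeping must confirm that after the transformation the sign $(-1)^a$ flips in exactly the way needed for the image to land on a genuine neighbor of $\phi(v)$. For $q$ this is a single sign check absorbed by the $+2a$ term; for $p$ the difficulty is compounded by the parity-dependent case split, since $j\mapsto -j-1$ toggles $j\bmod 2$ and hence the active branch of $p$, and one must verify that the flip of $a$ (and the resulting flip of $(-1)^a$) is consistent across the two cases. Once this sign-and-parity reconciliation is carried out for the fifth neighbor, the remaining four neighbors are routine, and the three containments $\phi(N_{G_n}(v))\subseteq N_{G_n}(\phi(v))$ follow, completing the proof that $f$, $p$, and $q$ are automorphisms of $G_n$.
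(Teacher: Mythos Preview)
Your approach is essentially the same as the paper's: reduce to showing that each bijection sends edges to edges by verifying $\phi(N_{G_n}(v))\subseteq N_{G_n}(\phi(v))$, and then carry out the five-neighbor bookkeeping (the paper presents this as two tables comparing $\phi$ applied to each neighbor of $(ab,j)$ against the five neighbors of $\phi(ab,j)$).

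One inaccuracy in your sketch of the $q$ case deserves mention. You assert that the two same-layer neighbors $(\bar a\bar b,j)$ and $(a\bar b,j)$ are ``permuted among themselves'' under $q$, with the fifth rule handled separately. In fact $q(a\bar b,j)=(a\bar c,-j+2a)$ is indeed a same-layer neighbor of $q(ab,j)=(ac,-j+2a)$, but $q(\bar a\bar b,j)=(\bar a c,-j+2\bar a)$ lands at layer $-j+2\bar a=-j+2a+2(-1)^a$, which is the \emph{fifth} (jump) neighbor of $q(ab,j)$; conversely, $q$ applied to the fifth neighbor $(\bar a c,\,j+2(-1)^a)$ yields $(\bar a\bar c,-j+2a)$, a same-layer neighbor. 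So the first same-layer rule and the fifth rule actually swap under $q$, exactly as the paper notes when it remarks that ``the last vertex of one table matches the first vertex of the other table.'' This does not break your method---carrying out the computation would reveal the correct matching---but your narrative of which rules go where needs adjusting.
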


The proof of Lemma~\ref{lem:auto} is given in the appendix.

\begin{lem}\label{lem:ver:trans}
For $n\ge 2$,  $G_n$ is vertex-transitive; that is, for every two vertices $v$ and $v'$, there exists an automorphism $\varphi$ such that $\varphi(v)=v'$.
\end{lem}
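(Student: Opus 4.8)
The plan is to exhibit a single orbit that exhausts $V(G_n)$. Since a composition of automorphisms is again an automorphism and the inverse of an automorphism is an automorphism, the automorphisms of $G_n$ form a group, and vertex-transitivity is precisely the statement that this group acts with a single orbit. Hence it suffices to fix one vertex, say $(00,0)$, and show that every vertex of $G_n$ lies in its orbit under the subgroup generated by the automorphisms $f$, $p$, and $q$ supplied by Lemma~\ref{lem:auto}. (One could equally invoke only that these three maps are automorphisms, without needing to identify the full automorphism group.)

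First I would organize $V(G_n)=\mathbb{Z}_2\times\mathbb{Z}_2\times\mathbb{Z}_{2n}$ into four \emph{columns} indexed by the first two coordinates $(a,b)\in\{00,01,10,11\}$, each column being a copy of $\mathbb{Z}_{2n}$ in the $j$-coordinate. The automorphism $f$, sending $(ab,j)\mapsto(ab,j+1)$, acts as a cyclic shift within a fixed column, so once a single vertex of a column is known to lie in the orbit, iterating $f$ reaches every vertex of that column. Thus the entire argument reduces to showing that the orbit of $(00,0)$ meets all four columns.

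To move between columns I would use $p$, which always flips the first coordinate and, crucially, flips the second coordinate exactly when applied at a vertex whose $j$-coordinate is odd. Concretely, $p(00,0)=(10,-1)$ reaches column $10$; $p(00,1)=(11,-2)$ reaches column $11$; and since column $10$ is by then already fully in the orbit, $p(10,1)=(01,-2)$ reaches column $01$. Interleaving these three hops with suitable powers of $f$ fills every column, so the orbit of $(00,0)$ is all of $V(G_n)$ and $G_n$ is vertex-transitive.

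The step demanding the most care is the column-hopping, because the effect of $p$ on the second coordinate is parity-dependent: to realize a prescribed change in the pair $(a,b)$ one must first move, via $f$, to a representative of the appropriate parity in $j$ before applying $p$. This is not a genuine obstacle, however, since $f$ already furnishes representatives of both parities in every column; in fact $f$ and $p$ alone suffice here, and $q$ is not required for transitivity.
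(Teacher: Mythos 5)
Your proposal is correct and follows essentially the same route as the paper: both fix the vertex $(00,0)$, use powers of $f$ to sweep out each column $\{(ab,j):j\in\mathbb{Z}_{2n}\}$, and use $p$ to hop between the four columns (the paper reaches columns $01$ and $11$ via $p\circp$ written as $p\circ p$ and via $p^{-1}$, while you apply $p$ at base points of suitable parity — a cosmetic difference). Your column-hop computations $p(00,0)=(10,-1)$, $p(00,1)=(11,-2)$, and $p(10,1)=(01,-2)$ all check out against the definition of $p$, so the argument is sound.
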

\begin{proof}
For a positive integer $j$, let $f^j$ denote the composition $\underbrace{f\circ \cdots \circ f}_{j\text{ times}}$.
Note that $f^{j}$ maps  $(00,0)$ to $(00,j)$, $f^j\circ p\circ p$  maps  $(00,0)$ to $(01,j)$,
$f^{j+1}\circ p$ maps $(00,0)$ to $(10,j)$, and
$f^{j+1}\circ p^{-1}$ maps $(00,0)$ to $(11,j)$.
By Lemma~\ref{lem:auto}, the aforementioned four compositions are  automorphisms of $G_n$.
This completes the proof.
\end{proof}

Let us define the following sets (see Figure~\ref{fig:UL}):
\begin{eqnarray*}
U_n&=&\left( \{ (ab,j) \in V(G) \mid a=0\}\setminus\{(01,2n-1)\} \right)\setminus N_{G_n}[(00,0)]  , \\
L_n&=& V(G_n)-(U_n\cup N_{G_n}[(00,0)]).%
\end{eqnarray*}

\begin{figure}[h!]
  \centering
  \includegraphics[width=13cm]{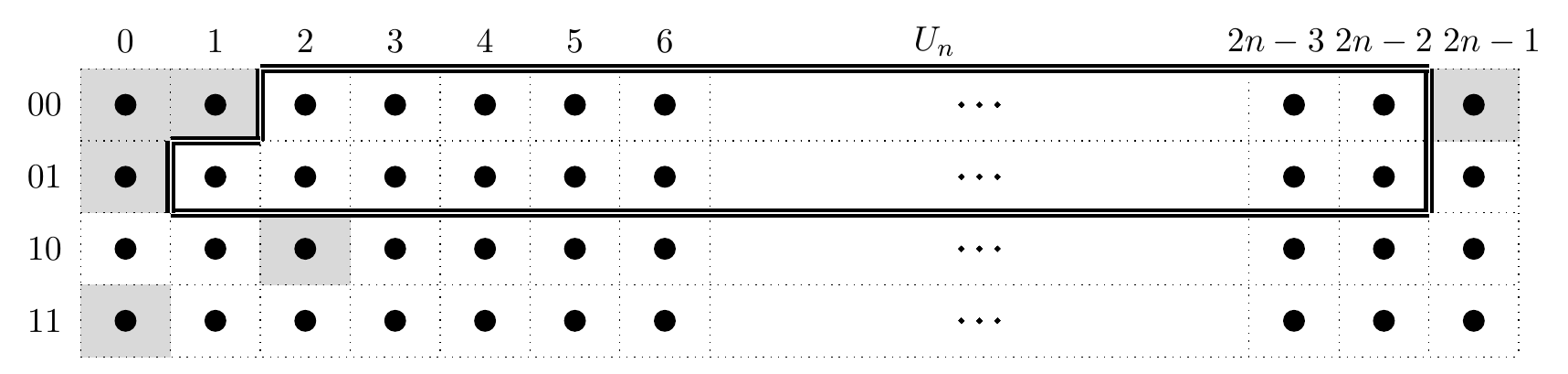}
  \caption{The vertices surrounded by the doubled lines are in $U_n$, and vertices in the other unshaded part is $L_n$. }
  \label{fig:UL}
\end{figure}

\begin{lem}\label{lem:auto:r}
For $n\ge 2$, there is an automorphism $r$ of $G_n$ satisfying that $r(00,0)=(00,0)$, $r(11,0)=(00,1)$, and  $r(U_n)\subset L_n$.
\end{lem}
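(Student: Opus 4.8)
The plan is to exhibit an explicit automorphism $r$ of $G_n$ and then verify the three required properties one at a time. Writing the vertex set as $\mathbb{Z}_2\times\mathbb{Z}_2\times\mathbb{Z}_{2n}$, I would define $r$ by a closed formula in the same spirit as the maps $p$ and $q$ of Lemma~\ref{lem:auto}: a formula that reflects the cyclic coordinate $j$ and permutes the ``type'' coordinate $(a,b)$, with a case distinction on the parity of $j$. The two prescribed values already force much of the structure. Since $r$ fixes $(00,0)$ and the edge $\{(00,0),(11,0)\}$ must be carried to the edge $\{(00,0),(00,1)\}$, the map behaves like a reflection about $(00,0)$; in particular $r$ must permute the neighborhood $N_{G_n}(00,0)$, and a short computation shows that this neighborhood induces a graph with a single edge, so the combinatorial constraints on $r$ are quite rigid. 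Once $r$ is written down, there are three checks: that $r$ is an automorphism, that it realizes the two prescribed values, and that $r(U_n)\subseteq L_n$.

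For the first check I would argue exactly as in the appendix proof of Lemma~\ref{lem:auto}: substitute a generic vertex $(ab,j)$ into the neighborhood description and confirm that $r$ carries $N_{G_n}(ab,j)$ bijectively onto $N_{G_n}(r(ab,j))$, splitting into the same parity cases that appear in the definition of $r$. Alternatively, if $r$ can be realized as a word in $f$, $p$, $q$ and their inverses, it is automatically an automorphism, since the automorphisms of $G_n$ form a group and $f,p,q$ are automorphisms by Lemma~\ref{lem:auto}; for example one checks directly that $q\circ p^{-1}$ already sends $(11,0)$ to $(00,1)$, so only its value at $(00,0)$ needs to be corrected. The prescribed values $r(00,0)=(00,0)$ and $r(11,0)=(00,1)$ are then obtained by a one-line substitution into the formula.

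The main obstacle is the containment $r(U_n)\subseteq L_n$, which I would handle by making both sets completely explicit and tracking images. Unwinding the definitions gives $U_n=\{(00,j):j\notin\{0,1,2n-1\}\}\cup\{(01,j):j\notin\{0,2n-1\}\}$ and $L_n=\{(10,j):j\neq 2\}\cup\{(11,j):j\neq 0\}\cup\{(01,2n-1)\}$, so that $|U_n|=4n-5$ and $|L_n|=4n-1$. With the formula for $r$ in hand, the verification reduces to a finite, routine computation: for each of the two families of vertices comprising $U_n$, apply $r$ and confirm that the image lands in one of the three families comprising $L_n$. The cardinality slack $|U_n|<|L_n|$ shows there is room for this to succeed, but the genuine content is to confirm that no vertex of $U_n$ is mapped into the forbidden region $N_{G_n}[(00,0)]\cup U_n$. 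This case analysis, driven by the parity cases in the formula for $r$ together with the excluded indices $j\in\{0,1,2n-1\}$ that carve $U_n$ out of the layer $\{a=0\}$, is where essentially all of the work lies, and it is the step I expect to be the most delicate.
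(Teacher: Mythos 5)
The central gap is that you never actually produce the automorphism $r$, and the two forms you propose for it do not match what is actually needed. The paper's $r$ is genuinely piecewise over a set that is \emph{not} described by the parity of $j$: it agrees with $q$ on the eight vertices $A=\{(00,j),(01,j),(10,j+1),(11,j+1): j\in\{0,1\}\}$ and with $f^2\circ p^{-1}$ everywhere else. A map given by ``a closed formula with a case distinction on the parity of $j$'' would act uniformly on each $j$-layer, and no such map (in particular, no single word in $f$, $p$, $q$) appears to satisfy all three conditions --- this is precisely why the paper resorts to a hybrid. Your remark that $q\circ p^{-1}$ sends $(11,0)$ to $(00,1)$ and ``only its value at $(00,0)$ needs to be corrected'' is not actionable: one cannot correct a single value of an automorphism, since $q\circ p^{-1}(00,0)=(10,3)$ and any composition used to fix this will disturb the other values. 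Because the actual $r$ is stitched together from two different automorphisms, the claim that $r$ is an automorphism is \emph{not} free and is not covered by your plan: one must separately verify that the hybrid is injective (the only nontrivial case being one vertex in $A$ and one outside) and that every edge with exactly one endpoint in $A$ is carried to an edge. This boundary check, done in the paper via an explicit table of the sixteen such edges, is a substantial part of the proof that your proposal does not anticipate.

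Two smaller points. First, your rigidity argument is built on the assertion that $N_{G_n}(00,0)$ induces a single edge; in fact $N_{G_n}(00,0)=\{(01,0),(11,0),(00,1),(00,2n-1),(10,2)\}$ is an independent set (consistent with $G_2\cong Q_4^d$ being triangle-free), so that constraint gives you less than you claim. Second, your explicit descriptions of $U_n$ and $L_n$ and the plan for checking $r(U_n)\subseteq L_n$ are correct and essentially match the paper: once $r$ is in hand, every $v\in U_n\setminus A$ has the form $(0b,j)$ and is sent to a vertex with first coordinate $1$ and third coordinate $-j+1\notin\{0,2\}$ for the relevant $j$, hence lands in $L_n$, and the single vertex $(01,1)=U_n\cap A$ is checked by hand. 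So the final containment is fine as a plan; the missing content is the construction and verification of $r$ itself.
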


The proof of Lemma~\ref{lem:auto:r} is given in the appendix.

\begin{lem}\label{lem:arc:trans}
For every $w,w'\in N_{G_n}(00,0)\setminus\{(01,0)\}$,
there is an automorphism $\varphi$ such that $\varphi(00,0)=(00,0)$ and $\varphi(w)=w'$.
\end{lem}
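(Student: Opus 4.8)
The plan is to show that the stabilizer of the vertex $(00,0)$ in $\mathrm{Aut}(G_n)$ already acts transitively on the four-element set $N_{G_n}(00,0)\setminus\{(01,0)\}$, using only the explicit automorphisms $q$ and $r$ that are available from the preceding lemmas. First I would compute the neighborhood directly from the definition: since $(00,0)$ has $a=0$, $b=0$, and $c=a+b=0$, its five neighbors are $(11,0)$, $(01,0)$, $(00,2n-1)$, $(00,1)$, and $(10,2)$, so that
\[
N_{G_n}(00,0)\setminus\{(01,0)\}=\{(11,0),\,(10,2),\,(00,1),\,(00,2n-1)\}.
\]

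Next I would record the action of the automorphism $q$ (an automorphism by Lemma~\ref{lem:auto}) on these vertices, reading off the formula $q(ab,j)=(ac,-j+2a)$ with $c=a+b$. One checks $q(00,0)=(00,0)$, so $q$ lies in the stabilizer of $(00,0)$, and moreover $q(11,0)=(10,2)$, $q(00,1)=(00,2n-1)$, and $q(00,2n-1)=(00,1)$. Thus $q$ swaps $(11,0)\leftrightarrow(10,2)$ and $(00,1)\leftrightarrow(00,2n-1)$ while fixing $(00,0)$. Separately, Lemma~\ref{lem:auto:r} supplies an automorphism $r$ with $r(00,0)=(00,0)$ and $r(11,0)=(00,1)$, which also lies in the stabilizer of $(00,0)$.

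Combining these, I would track the orbit of the single vertex $(11,0)$ under the point-stabilizer: the identity sends $(11,0)$ to itself, $q$ sends it to $(10,2)$, $r$ sends it to $(00,1)$, and $q\circ r$ sends it to $q(00,1)=(00,2n-1)$. Hence all four vertices of $N_{G_n}(00,0)\setminus\{(01,0)\}$ lie in a single orbit of the stabilizer of $(00,0)$; each of the four automorphisms $\mathrm{id},\,q,\,r,\,q\circ r$ fixes $(00,0)$ by construction. Given arbitrary $w,w'$ in this set, I would pick stabilizer automorphisms $\psi_w,\psi_{w'}$ with $\psi_w(11,0)=w$ and $\psi_{w'}(11,0)=w'$, and then take $\varphi=\psi_{w'}\circ\psi_w^{-1}$, which fixes $(00,0)$ and satisfies $\varphi(w)=w'$, as required.

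There is essentially no deep obstacle here; the work is entirely the bookkeeping of the two permutations, and the only point requiring care is verifying that $q$ indeed stabilizes $(00,0)$ and realizes the claimed transpositions on the neighborhood, together with correctly composing $q\circ r$ to reach the last vertex $(00,2n-1)$. The excluded vertex $(01,0)$ never enters the argument, since the statement only asserts transitivity on the remaining four neighbors; indeed $q$ happens to fix $(01,0)$, which is consistent with its being kept apart from the rest.
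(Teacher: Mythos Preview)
Your proof is correct and follows essentially the same approach as the paper's: both use that $q$ fixes $(00,0)$ while swapping $(11,0)\leftrightarrow(10,2)$ and $(00,1)\leftrightarrow(00,2n-1)$, together with the automorphism $r$ from Lemma~\ref{lem:auto:r} sending $(11,0)$ to $(00,1)$, and then conclude by composing and inverting. Your version is simply more explicit in exhibiting the orbit of $(11,0)$ and the construction $\varphi=\psi_{w'}\circ\psi_w^{-1}$, whereas the paper leaves the final ``compositions and inverses'' step to the reader.
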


\begin{proof}
Note that $N_{G_n}(00,0)\setminus\{(01,0)\}$ has four vertices $w_1=(11,0)$, $w_2=(00,1)$, $w_3=(10,2)$, and $w_4=(00,2n-1)$.
Note that the automorphism $q$ of $G_n$ satisfies
$q(00,0)=(00,0)$,  $q(w_1)=w_3$, and $q(w_2)=w_4$.
By Lemma~\ref{lem:auto:r}, there is an automorphism $r$ of $G_n$ such that $r(00,0)=(00,0)$ and $r(w_1)=w_2$.
By considering compositions and inverses of $q$ and $r$, we can find an automorphism fixing $(00,0)$ and mapping $w$ to $w'$ for every $w,w'\in N_{G_n}(00,0)\setminus\{(01,0)\}$.
\end{proof}

\subsection{A longest induced path of $G_n$}\label{subsec:proof:noP3n}

In this subsection, we will show that a longest induced path of $G_n$ has exactly $3n-1$ vertices.
We  first provide a set of vertices that induces a path on $3n-1$ vertices.
As one can see in Figure~\ref{fig:long:path}, the set
$\displaystyle\left(\bigcup_{i=0}^ {n-1}  \{ (00,2i), (01,2i)\} \right)\cup
\left(\bigcup_{i=0}^{\lceil\frac{n}{2}\rceil-2}\{(01,4i+1), (00,4i+3)\}\right)\cup S$
 induces a path on $3n-1$ vertices, where
\[ S=
\begin{cases}
\{(01,2n-3)\} &\text{if $n$ is even}\\
\emptyset&\text{if $n$ is odd.}\\
\end{cases}
\]

\begin{figure}[h!]
  \centering
  \includegraphics[page=1,width=8.4cm]{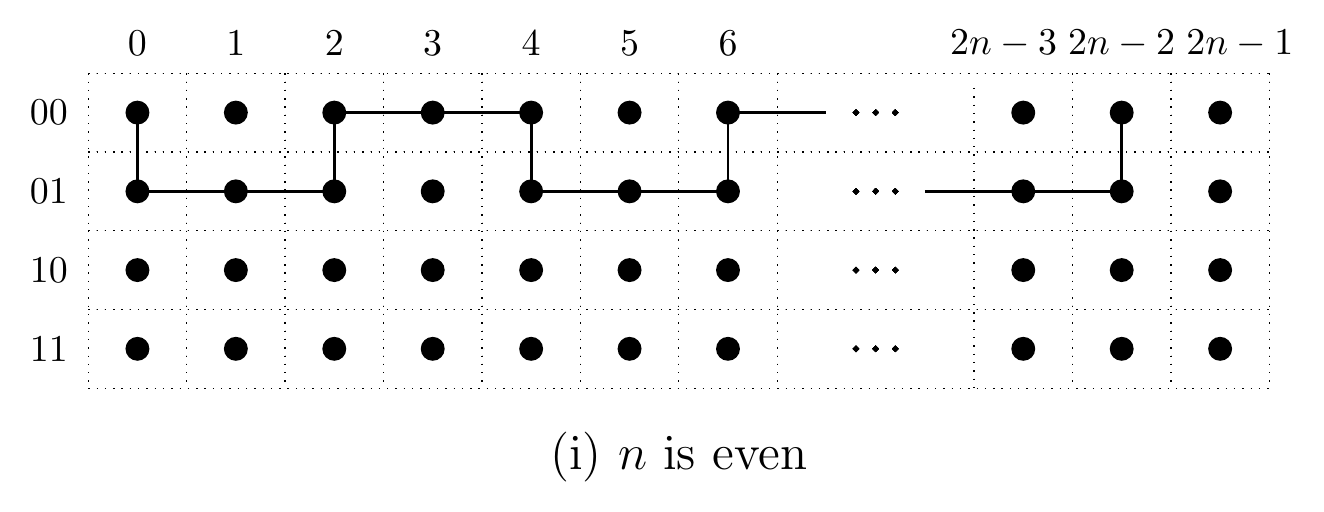}
  \includegraphics[page=2,width=8.4cm]{long_paths.pdf}
\caption{An induced path on $3n-1$ vertices in $G_n$.
  }
  \label{fig:long:path}
\end{figure}

We now show that $G_n$ does not have an induced path on $3n$ vertices.

\begin{prop}\label{prop:no_path_3n}
For $n\ge 2$, a longest path of $G_{n}$ has exactly $3n-1$ vertices.
\end{prop}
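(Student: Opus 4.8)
The proof of Proposition~\ref{prop:no_path_3n} divides naturally into two halves. The construction preceding the statement already exhibits an explicit induced path on $3n-1$ vertices, so the content of the proposition is the upper bound: no induced path of $G_n$ has $3n$ or more vertices. My plan is to prove this upper bound by bounding the number of vertices an induced path can contain in each of the ``columns'' indexed by the $\mathbb{Z}_{2n}$-coordinate $j$, exploiting the layered structure of $G_n$. Observe that for a fixed $j$, the four vertices $(00,j),(01,j),(10,j),(11,j)$ interact with each other and with columns $j\pm 1$ and $j\pm 2$ in a rigid way dictated by the neighborhood formula; the edges $(ab,j)(\bar a\bar b,j)$ and $(ab,j)(a\bar b,j)$ are intra-column, while the shifts by $\pm 1$ and $\pm 2$ move between columns. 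The first step is therefore to classify, for each $j$, how many of the four vertices of column $j$ can appear on an induced path and in what adjacency pattern, since a path is a very constrained subgraph (maximum degree $2$, no induced cycles or chords).

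Next I would translate the induced-path condition into a counting argument. Since $G_n$ is $5$-regular and an induced path uses each internal vertex with exactly two path-neighbors and no other adjacencies among path vertices, every vertex on the path ``kills'' its three non-path neighbors in the sense that none of them can lie on the path adjacent to it in a forbidden way. Concretely, I would set up a discharging or averaging argument over the $2n$ columns: assign to each column a count of how many path vertices it contains, show that the total is at most $3n-1$ by arguing that columns cannot all be ``full'' simultaneously, and that fullness in one column forces emptiness or near-emptiness in a neighboring column because of the induced (chordless) requirement. The automorphism group established in Lemma~\ref{lem:ver:trans} and Lemma~\ref{lem:arc:trans} lets me assume without loss of generality that a longest induced path has a prescribed starting vertex and first edge, which should cut down the case analysis substantially.

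The technical heart is a careful local analysis of how an induced path traverses the graph. I would argue that the path, read off in terms of its $j$-coordinates, behaves like a walk on $\mathbb{Z}_{2n}$ with steps in $\{0,\pm1,\pm2\}$, where a step of $0$ corresponds to using an intra-column edge, and that the chordless condition forbids certain consecutive step patterns (for instance, one cannot return to a column in a way that creates a chord with a vertex already used two columns back). By bounding the number of times the walk can revisit or cluster in any fixed set of columns, I expect to obtain that at most three vertices can be accumulated per ``period'' of two columns on average, matching the $3n-1$ bound with the explicit path being the tight example.

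The main obstacle, I anticipate, will be controlling the chords that arise from the $\pm2$-shift edges $(ab,j)(\bar ac, j+2(-1)^a)$, since these connect non-consecutive columns and so are exactly the edges that can secretly create a chord in a path that looks locally fine. Ruling these out cleanly, rather than by brute enumeration of all step sequences, will require identifying the right invariant of the walk on $\mathbb{Z}_{2n}$; this is where I would expect to spend the most effort, and where the specific residue behavior of $j \bmod 2$ (which governs the $p$-automorphism and the parity-dependent structure) becomes essential.
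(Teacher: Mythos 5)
Your plan is only a sketch, and the central counting step it rests on would not close. The explicit induced path on $3n-1$ vertices described before the proposition already achieves a density of exactly three vertices per two consecutive columns (two vertices in each even column $2i$, one vertex in roughly every other odd column), so the bound you hope to prove by averaging --- ``at most three vertices can be accumulated per period of two columns on average'' --- is tight locally and would only yield $3n$, not $3n-1$. The missing vertex is a \emph{global} deficit, not a local one: any purely column-local discharging argument over the cyclic index set $\mathbb{Z}_{2n}$ must also explain why the extremal local pattern cannot be sustained all the way around the cycle, and your proposal contains no mechanism (parity invariant, endpoint accounting, wrap-around obstruction) for extracting that final $-1$. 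You also flag the control of the $\pm 2$-shift chords as the anticipated main difficulty and leave it unresolved, so the chordlessness constraints that your ``forbidden step patterns'' would encode are never actually derived.

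The paper avoids all of this by a completely different route: a minimal-counterexample induction on $n$. The key structural fact (Claim~2.6 in the paper) is that deleting the closed neighborhoods of the first one or two vertices of a hypothetical induced $P_{3n}$ leaves a subgraph that embeds as an induced subgraph of $G_{n-1}$; by minimality of $n$, that subgraph carries no induced path on $3n-3$ vertices, while the hypothetical $P_{3n}$ would force one (it loses at most three vertices to the deleted neighborhoods). Vertex- and arc-transitivity (Lemmas~\ref{lem:ver:trans} and~\ref{lem:arc:trans}) reduce the choice of the first edge to two cases, and the base case $n=2$ is R\"aty's graph. If you want to salvage your approach, the honest comparison is this: the induction converts the global question into a single local deletion plus a smaller instance of the same problem, which is exactly the global leverage your column-averaging scheme lacks. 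Without an analogous global ingredient, your outline cannot be completed into a proof of the stated bound.
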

\begin{proof}
It is sufficient to show that $G_n$ does not have an induced path on $3n$ vertices.
Suppose to the contrary that for some $n$, $G_n$ has an induced path $P$ on $3n$ vertices, and let $P:v_0v_1\ldots v_{3n-1}$. We take the minimum such $n$.
Since $G_2$ is isomorphic to the graph in~\cite{Raty_unpub} which is $P_6$-induced-saturated (see also the appendix), we have $n\geq 3$.
\begin{clm}\label{claim:new}
If there is an induced path $Q$ of $G_n$ in the unshaded part of the left figure of Figure~\ref{fig:new:claim},
then $Q$ has at most $3n-4$ vertices.
Moreover,
if such $Q$ starts with one vertex in  $\{a_i,b_i\}$
and $|Q\cap \{a_i,b_i\}| = 1$ for some $i\in\{1,2\}$,
then $Q$ has at most $3n-5$ vertices.
\end{clm}
\begin{figure}[h!]
  \centering
  \includegraphics[height=2.8cm, page=1]{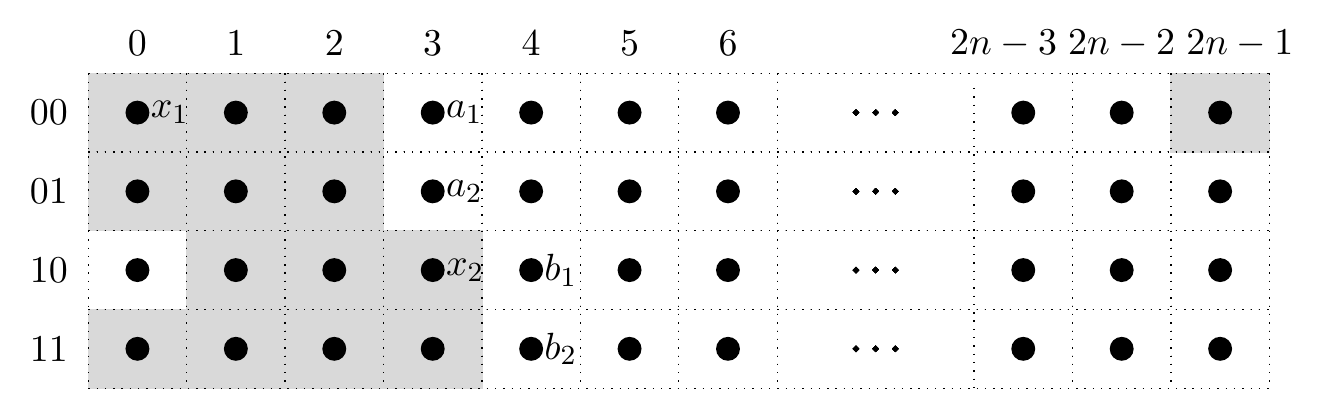}
  \includegraphics[height=2.8cm, page=2]{fig_new_claim.pdf}
  \caption{Illustrations for Claim~\ref{claim:new}.}
  \label{fig:new:claim}
\end{figure}
\begin{proof}
We use the labels of the vertices in Figure~\ref{fig:new:claim}.
Let $Q$ be an induced path of $G_n$ in the unshaded part of the left figure of Figure~\ref{fig:new:claim}.
We consider the graph $H$ that is isomorphic to $G_{n-1}$, and whose vertices are labeled as the right figure of Figure~\ref{fig:new:claim}.
Let $Q'$ be the subgraph of $H$, which is the embedding of $Q$ into $H$. Note that $Q'$ is an induced path of $H$, isomorphic to $Q$.
By the minimality of $n$, $Q'$ has at most $3n-4$ vertices, and so does $Q$.

Suppose that $Q$ starts with one vertex in $\{a_i,b_i\}$ and $|Q \cap \{a_i,b_i\}| = 1$ for some $i \in \{1, 2\}$.
Then $Q'$ starts with one vertex in $\{a_i',b_i'\}$ and $|Q'\cap \{a_i',b_i'\}| =1$ for some $i\in\{1,2\}$.
Then $x_i'+Q'$ is an induced path of $H$, since the neighbors of $x_i'$ in the unshaded part are $a_i'$ and $b_i'$.
Hence, if $|V(Q')|\ge 3n-4$, then $x_i'+Q$ is an induced path on at least $3n-3$ vertices, which is a contradiction to the minimality of $n$.
\end{proof}

By Lemma~\ref{lem:ver:trans}, we may assume  $v_0=(00,0)$.
Furthermore, by Lemma~\ref{lem:arc:trans}, we may assume either $v_1 = (01,0)$ or $v_1=(00,1)$.

Suppose $v_1=(01,0)$.
Let $S=N_{G_{n}}(v_0)\cup N_{G_{n}}(v_1)$, which are the vertices marked with an $\times$ in Figure~\ref{fig:v:010}.
The graph $G_{n}-S$ is the disjoint union of $K_2$ and a graph $H$, where $H$ is induced by the unshaded part of Figure~\ref{fig:v:010}.
We know $S$ contains at most three vertices of $P$,  so $H$ must contain an induced path on $3n-3$ vertices.
Yet, by the minimality of $n$,  $G_{n-1}$ does not contain an induced path on $3n-3$ vertices. Since $H$ is an induced subgraph of $G_{n-1}$, $H$ also does not contain an induced path on $3n-3$ vertices, which is a contradiction.
Thus $v_1=(00,1)$, and $v_2 \in \{(01,1), (00,2),(10,3), (11,1)\}$.
\begin{figure}[h!]
  \centering
  \includegraphics[width=11cm,page=1]{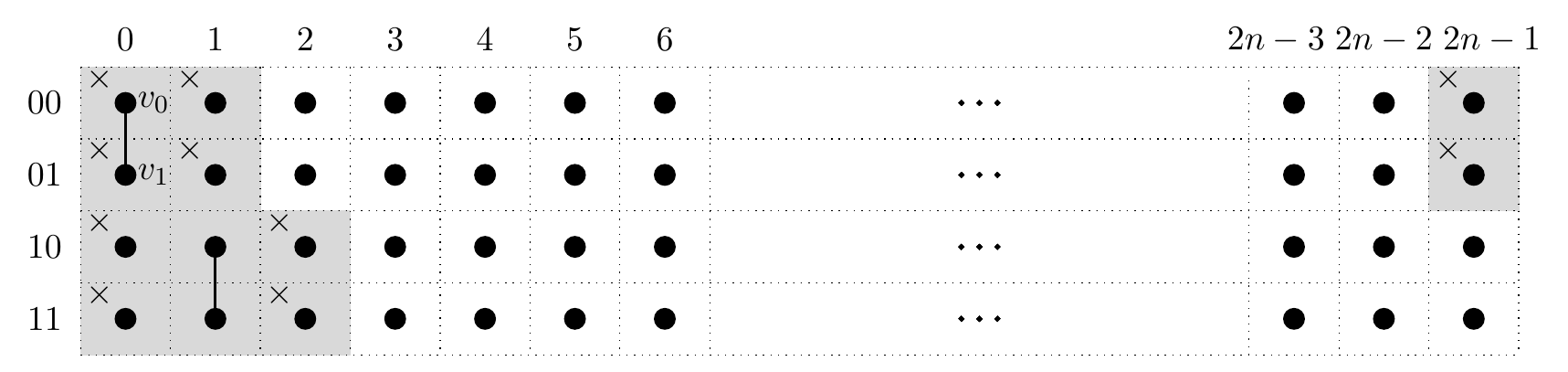}
  \caption{The unshaded part is isomorphic to an induced subgraph of $G_{n-1}$ }
  \label{fig:v:010}
\end{figure}

See the left figure of Figure~\ref{fig:v:011} and we use the labels of the vertices as in the figure.
The vertices marked with an $\times$ do not belong to $V(P)$ as they are in $N_{G_n}(v_0) \setminus \{v_1\}$.
Note that exactly one of $y_i$, that is $v_2$, belongs to $V(P)$.
\begin{figure}[h!]
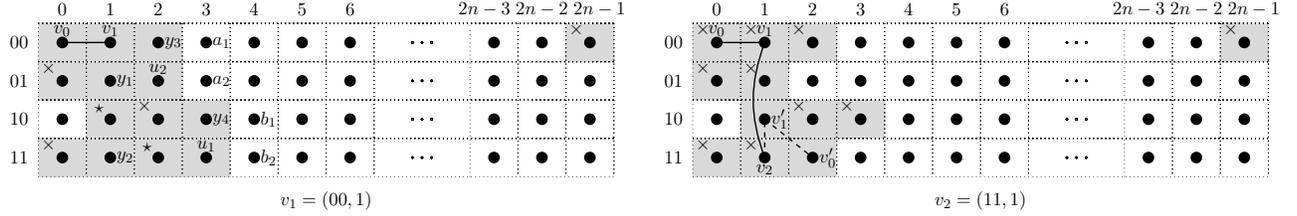

  \centering
  \includegraphics[height=3.15cm,page=2]{fig_no_3n_path.pdf}
  \includegraphics[height=3.15cm,page=3]{fig_no_3n_path.pdf}
  \caption{An illustration when $v_1=(00,1)$. When $v_2=(11,1)$, the shaded part does not contain a vertex in  $V(P)\setminus\{v_0,v_1,v_2\}$.}
\label{fig:v:011}
\end{figure}

If $v_3\in \{a_1,a_2,b_1,b_2\}$, then $v_2\in \{y_3,y_4\}$ and so
$V(P)\setminus\{v_0,v_1,v_2\}$ are in the unshaded part of the left figure of  Figure~\ref{fig:v:011}, which is a contradiction to Claim~\ref{claim:new}.
Hence, $v_3\not\in \{a_1,a_2,b_1,b_2\}$.

Let $Q:v_4v_5\ldots v_{3n-1}$, which is is an induced path of $G_n$ on $3n-4$ vertices.
Suppose that $v_2\in \{y_1,y_3,y_4\}$.
Since $v_3\not\in \{a_1,a_2,b_1,b_2\}$, $v_3=u_i$ for some $i\in \{1,2\}$.
It is clear that $v_4\in \{a_i,b_i\}$, so $|V(Q)\cap \{a_i,b_i\}| = 1$.
Note that $V(Q)$ is contained in the unshaded part of the left figure of Figure~\ref{fig:v:011}.
This is a contradiction to the `moreover' part of Claim~\ref{claim:new}. Hence, $v_2=y_2=(11,1)$.

Now see the right figure of Figure~\ref{fig:v:011}.
Note that the vertices marked with an $\times$ are in $N_{G_n}(v_0)\cup N_{G_n}(v_1)$ and so the shaded part of the figure  does not contain a vertex in  $V(P)\setminus\{v_0,v_1,v_2\}$.
We consider a path $P'$ obtained from $P$ by replacing $v_0$ and $v_1$ with $v'_0=(11,2)$ and $v'_1=(10,1)$, respectively. Then $P'$ is also an induced path of $G_{n}$ on $3n$ vertices, since the neighbors of $v_0'$ and $v'_1$ are in the shaded part.
Then the image of $P'$ under the automorphism $f^3\circ p^{-1}$ is an induced path starting with $v_0,v_1,y_1$, since
\begin{eqnarray*}
&&f^3\circ p^{-1}(v'_0)=  f^3\circ p^{-1} (11,2)
= f^3(00,-3)=(00,0)=v_0\\
&&f^3\circ p^{-1}(v'_1)=  f^3\circ p^{-1} (10,1)
= f^3(00,-2)=(00,1)=v_1\\
&&f^3\circ p^{-1}(v_2)= f^3\circ p^{-1} (11,1) =
f^3 (01,-2)=(01,1)=y_1.
\end{eqnarray*}
We can reach a contradiction by the same argument of the previous paragraph.
\end{proof}

\subsection{A longest induced path of the graph obtained from $G_n$ by either adding a non-edge or removing an edge}\label{subsec:pf:P3n}

\begin{prop}\label{prop:G-e:3n}
For $n\ge 2$ and every edge $e$ of $G_n$, $G_n-e$ contains an induced path on $3n$ vertices.
\end{prop}

\begin{proof}
Take an edge $e=vw$ of $G_n$.
By Lemmas~\ref{lem:ver:trans}~and~\ref{lem:arc:trans}, we may assume that   $v=(00,0)$ and $w\in \{ (01,0), (00,2n-1)\}$.

Suppose that $w=(01,0)$.
See Figure~\ref{fig:long:path:G-e}~(i).
If $n$ is even,
then by adding the vertex $(00,2n-1)$
(the circled vertex of the left figure of Figure~\ref{fig:long:path:G-e}~(i)) to the path in Figure~\ref{fig:long:path}~(i),
we obtain an induced path of $G-e$ on $3n$ vertices.
If $n$ is odd,
then the path obtained from the path in Figure~\ref{fig:long:path}~(ii) by adding the vertices $(00,2n-1)$
and $(11,0)$  (the circled vertices of the right figure of Figure~\ref{fig:long:path:G-e}~(i))
and by deleting the vertex $(01,2n-2)$ (the shaded vertex of the right figure of Figure~\ref{fig:long:path:G-e}~(i)),
we obtain an induced path of $G-e$ on $3n$ vertices.
Suppose that the endpoints of $w=(00,2n-1)$.
See Figure~\ref{fig:long:path:G-e}~(ii).
If $n$ is even,
then by adding the vertex $(00,2n-1)$ (the circled vertex of the left figure of Figure~\ref{fig:long:path:G-e}~(ii)) to the path in Figure~\ref{fig:long:path}~(i), we obtain an induced path of $G-e$ on $3n$ vertices.
If $n$ is odd,
then the path obtained from the path in Figure~\ref{fig:long:path}~(ii)
by adding the vertices $(00,2n-1)$ and $(11,2n-1)$  (the circled vertices of the right figure of Figure~\ref{fig:long:path:G-e}~(ii))
and by deleting the vertex $(01,2n-2)$ (the shaded vertex of the right figure of Figure~\ref{fig:long:path:G-e}~(ii)) is an induced path of $G-e$ on $3n$ vertices.
\end{proof}
\begin{figure}[h!]
 (i) $w=(01,0)$:\\[1ex]
  \includegraphics[height=3.2cm,page=1]{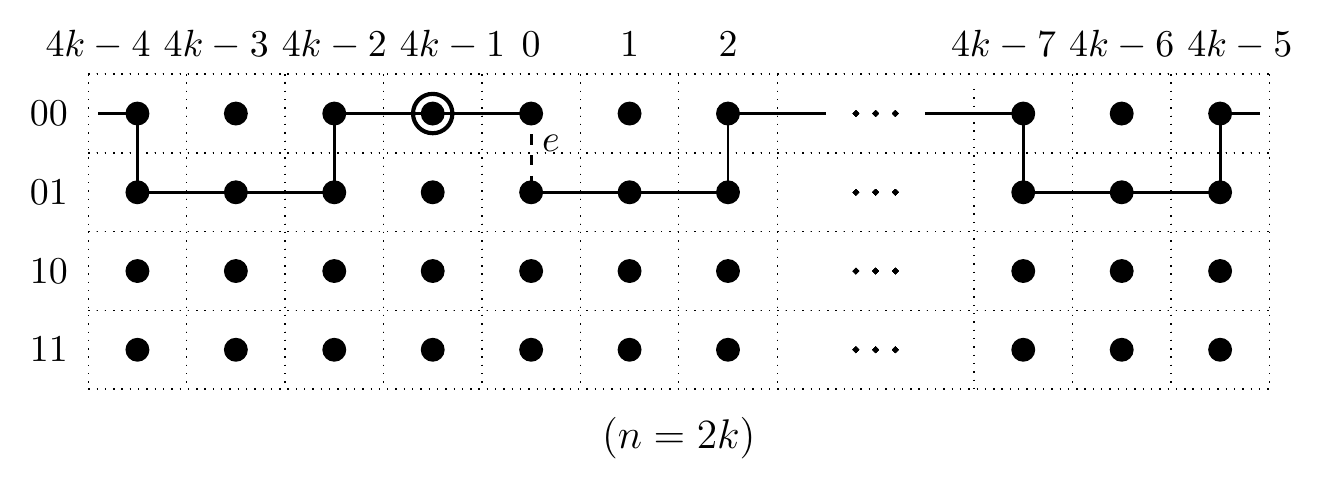}
    \includegraphics[height=3.2cm,page=2]{fig_del_edge.pdf}
  \\[2ex]
   (ii) $w=(00,1)$:\\[1ex]
    \includegraphics[height=3.2cm,page=3]{fig_del_edge.pdf}
  \includegraphics[height=3.2cm,page=4]{fig_del_edge.pdf}
  \caption{An induced path on $3n$ vertices in $G_n-e$, where $e=vw$ is the dashed edge.}
  \label{fig:long:path:G-e}
\end{figure}

\begin{prop}\label{prop:G+e:3n}
For $n\ge 2$ and every non-edge $e$ of $G_n$, $G_n+e$ contains an induced path on $3n$ vertices.
\end{prop}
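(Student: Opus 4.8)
The plan is to mirror the strategy used for Proposition~\ref{prop:G-e:3n}: normalize the non-edge using the automorphisms of $G_n$, and then for each resulting representative exhibit an explicit induced path on $3n$ vertices. First I would record the guiding observation that, since $G_n$ itself contains no induced $P_{3n}$ by Proposition~\ref{prop:no_path_3n}, every induced copy of $P_{3n}$ in $G_n+e$ must contain both endpoints of $e$ as two consecutive vertices; in other words, it suffices to produce an induced path on $3n$ vertices that traverses the new edge $e$. By vertex-transitivity (Lemma~\ref{lem:ver:trans}) I may assume $v=(00,0)$, so that $w$ is an arbitrary vertex of $V(G_n)\setminus N_{G_n}[(00,0)]$.

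Next I would cut down the possibilities for $w$ using the automorphisms that fix $(00,0)$. The map $q$ fixes $(00,0)$ and acts as a reflection in the $j$-coordinate (sending $(0b,j)\mapsto(0b,-j)$ on the layer $a=0$, and $(1b,j)\mapsto(1\,\overline{b},-j+2)$ on the layer $a=1$), while the automorphism $r$ of Lemma~\ref{lem:auto:r} fixes $(00,0)$ and carries $U_n$ into $L_n$. Using $q$ together with $r$ (and the identities already exploited in Lemma~\ref{lem:arc:trans}) I would reduce $w$ to a small number of types according to its $(a,b)$-label and the side of the reflection on which it lies. Because none of these stabilizer automorphisms translates along the $j$-axis, the reduction cannot bring $w$ to a bounded set of vertices; rather it leaves, for each type, a one-parameter family of representatives indexed by the $j$-coordinate of $w$ (its ``distance'' from $v$ along the cyclic direction).

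For each such type I would then give an explicit induced path on $3n$ vertices passing through the edge $(00,0)\,w$, most cleanly by taking a longest induced path $R$ on $3n-1$ vertices (as in Figure~\ref{fig:long:path}) with $v=(00,0)$ as an endpoint, arranged so that $w\notin V(R)$ and $w$ has no neighbour in $R$ in $G_n$; then prepending $w$ along the edge $e$ yields an induced $P_{3n}$ in $G_n+e$. Since $|N_{G_n}[w]|=6$, this only requires steering a longest induced path away from six forbidden vertices, which the flexibility of the construction in Figure~\ref{fig:long:path} permits; as in the proof of Proposition~\ref{prop:G-e:3n}, the precise routing will split into subcases according to the parity of $n$ and of the coordinate $j$ of $w$.

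The main obstacle I anticipate is exactly this last step: because the stabilizer of $(00,0)$ does not act transitively on the non-neighbours, $w$ ranges over $\Theta(n)$ inequivalent positions, so the required induced paths must be produced as parameterized families rather than as finitely many fixed pictures. Verifying that each such path is genuinely induced---that is, that no chords appear among its $3n$ vertices and that $w$ meets the path only at $v$---is where the bulk of the careful, if routine, checking lies, and it is the part most prone to boundary effects near $j\equiv 0$ and $j\equiv 2n-1$, where the cyclic structure of $\mathbb{Z}_{2n}$ interacts with the excluded vertices defining $U_n$ and $L_n$.
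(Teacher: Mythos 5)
Your plan coincides with the paper's own proof: normalize $v=(00,0)$ by vertex-transitivity, use the automorphism $r$ of Lemma~\ref{lem:auto:r} to push the other endpoint $w$ into $L_n$, and then for each residue class of the $j$-coordinate of $w$ exhibit an induced path on $3n-1$ vertices ending at $(00,0)$ that avoids $N_{G_n}[w]$, so that appending $w$ along the new edge $e$ yields the required induced $P_{3n}$. The only content you leave unexecuted is precisely what the paper supplies in the ten figures accompanying Proposition~\ref{prop:G+e:3n}, namely the explicit parameterized families of such paths, one for each of five types of $w$ and each parity of $n$.
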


\begin{proof}
Consider a non-edge $e$ of $G_n$, which we may assume one endpoint is $(00,0)$ by Lemma~\ref{lem:ver:trans}.
Note that $U_n\cup L_n$ is the set of all non-neighbors of $(00,0)$, where $U_n$ and $L_n$ are the sets defined before Lemma~\ref{lem:auto:r}.
By Lemma~\ref{lem:auto:r}, it is sufficient to show that $G_n+e$ contains an induced path on $3n$ vertices, where the other endpoint $w$ of $e$ is in $L_n$.
In the following, we consider five cases according to $w$.  For each case, we provide two figures, depending on the parity of $n$.
Each figure shows the edges of an induced path $P$ of $G_n$ on $3n-1$ vertices such that $(00,0)$ is an end vertex of $P$.
Now, adding a non-edge joining $(00,0)$ and a circled vertex $w$ to $G_n$ extends $P$ to an induced path on $3n$ vertices, which proves the proposition.  We remark that the shaded part of each figure represents a repeated pattern of the path.
\begin{itemize}
    \item[(i)] $w$ is either $(10,4t)$ or $(11,4t+2)$ for some $t$
\end{itemize}
  \includegraphics[height=2.6cm,page=1]{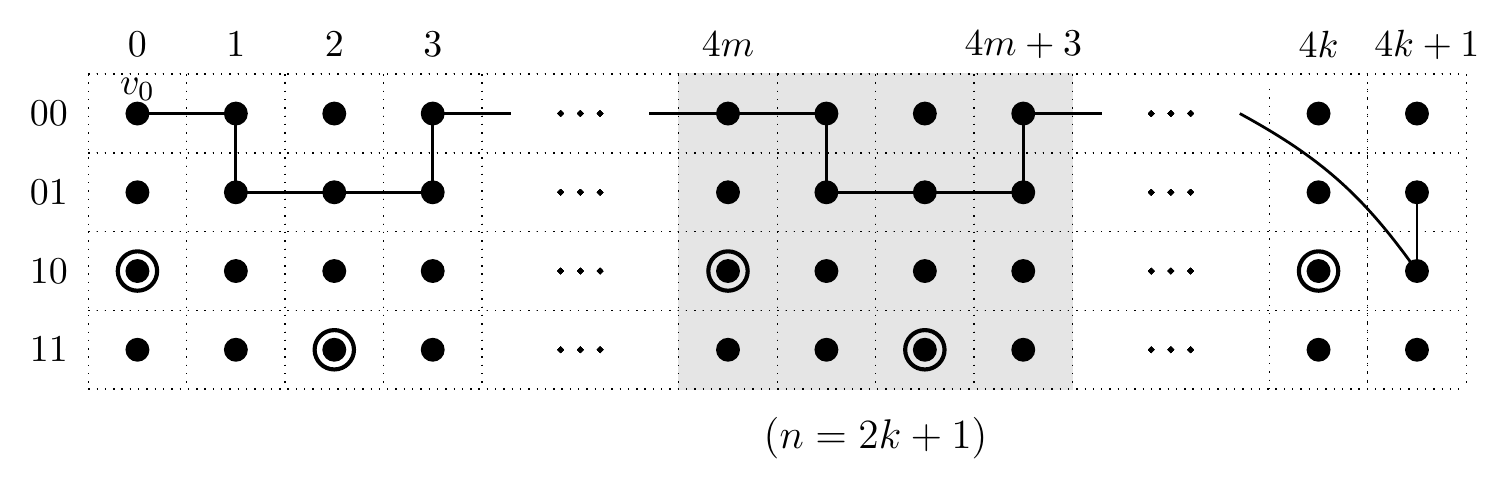}   \includegraphics[height=2.6cm,page=2]{fig_add_edge.pdf}
\begin{itemize}
    \item[(ii)] $w$ is either $(10,4t+1)$ or $(11,4t+3)$ for some $t$
\end{itemize}
    \includegraphics[height=2.6cm,page=3]{fig_add_edge.pdf}
    \includegraphics[height=2.6cm,page=4]{fig_add_edge.pdf}
\begin{itemize}
\item[(iii)] $w$ is either  $(10,4t+2)$ or $(11,4t)$ for some $t$
\end{itemize}
  \includegraphics[height=2.6cm,page=5]{fig_add_edge.pdf}
  \includegraphics[height=2.6cm,page=6]{fig_add_edge.pdf}
\begin{itemize}
\item[(iv)] $w$ is either  $(10,4t+3)$ or $(11,4t+1)$ for some $t$
\end{itemize}
   \includegraphics[height=2.6cm,page=7]{fig_add_edge.pdf}
  \includegraphics[height=2.6cm,page=8]{fig_add_edge.pdf}
\begin{itemize}
\item[(v)]  $w=(01,2n-1)$
\end{itemize}
   \includegraphics[height=2.6cm,page=9]{fig_add_edge.pdf}
   \includegraphics[height=2.6cm,page=10]{fig_add_edge.pdf}
\end{proof}

Hence, from Propositions~\ref{prop:no_path_3n},~\ref{prop:G-e:3n},~and~\ref{prop:G+e:3n}, Theorem~\ref{thm:P3n} follows.

\section{Proof of Theorem~\ref{thm:P6:Kneser}}\label{sec:proof:Kneser}

Recall that $K(n, 2)$ denotes the Kneser graph and let $n\ge 5$ be an integer.
First, we will show that $K(n,2)$ has no induced path on 6 vertices.
Suppose to the contrary that $K(n, 2)$ has an induced path $P: v_1v_2\ldots v_6$.
Without loss of generality, we may assume $v_1=\{1,2\}, v_2=\{3,4\}$, and $v_3=\{1,5\}$.
Since $v_4\cap v_3=\emptyset$ and $v_4\cap v_i\neq \emptyset$ for every $i\in \{1, 2\}$,
we have  $2\in v_4$, and so without loss of generality we may assume $v_4=\{2,3\}$.
Similarly, since $v_5\cap v_4=\emptyset$ and $v_5\cap v_i\neq\emptyset$ for every $i\in\{1,2\}$, we know $v_5=\{1,4\}$.
Since $v_6\cap v_5=\emptyset$ and $v_6\cap v_i\neq\emptyset$
 for every $i\in\{1,2\}$,
we know $v_6=\{2,3\}=v_4$, which is a contradiction.

To show that deleting an arbitrary edge creates an induced $P_6$, we may assume the edge joining  $\{2,3\}$ and $\{4,5\}$ is deleted, since $K(n,2)$ is arc-transitive.
Now, $\{4,5\}$, $\{1,2\}$, $\{3,4\}$, $\{1,5\}$, $\{2,3\}$, $\{1,4\}$ form an induced $P_6$.
To show that adding a non-edge creates an induced $P_6$, we may assume that the non-edge joining
$\{1,2\}$ and $\{1,3\}$ is added, since $K(n, 2)$ is arc-transitive.
Then $\{1,3\}$, $\{1,2\}$, $\{3,4\}$, $\{1,5\}$, $\{2,3\}$, $\{1,4\}$ form an induced $P_6$. Hence, the Kneser graph $K(n,2)$ is  $P_6$-induced-saturated for all $n\ge 5$.

\section{Remarks and further research directions}\label{sec:rmk}

In this paper, we found a $P_k$-induced-saturated graph for infinitely many values of $k$.
Yet, there are still many values of $k$ for which we do not know if a $P_k$-induced-saturated graph exists.
The ultimate goal is to answer the following question.

\begin{ques}
Can we classify all positive integers $k$ for which a $P_k$-induced-saturated graph exists?
\end{ques}

As we know there exists a $P_k$-induced-saturated graph when $k\in\{2,3\}$ and there is no $P_k$-induced-saturated graph when $k=4$, the first open case of the above question is when $k=5$.

\begin{ques}
Does there exist a $P_5$-induced-saturated graph?
\end{ques}

The graph $G_n$ (in Subsection~\ref{subsec:construction}) is $5$-regular.
In other words, there are infinitely many integers $k$ such that there is a 5-regular $P_k$-induced-saturated graph.

The Kneser graph $K(5,2)$ in Theorem~\ref{thm:P6:Kneser}, which is $P_6$-induced-saturated, is 3-regular.
For many values of $k$, we found a 3-regular $P_k$-induced-saturated graph with the aid of a computer.
For example,
the Heawood graph is $P_k$-induced-saturated for $k\in\{8,9\}$,
the generalized Petersen graph $GP(10,3)$ is $P_k$-induced-saturated for $k\in\{12,13\}$,
the Coxeter graph is $P_{19}$-induced-saturated,
the Dyck graph is $P_{k}$-induced-saturated for $k\in\{21,22\}$, and
the generalized Pertersen graphs $GP(17,3)$ and $GP(17,6)$ are $P_{23}$-induced-saturated.
These findings made us wonder if the following question is true.

\begin{ques}
Are there infinitely many integers $k$ for which there is a 3-regular $P_k$-induced-saturated graph?
\end{ques}

Let $Q^d_{n}$ be the graph obtained by adding the diagonals to the $n$-dimensional hypercube $Q_n$.
The graph constructed by R\"{a}ty in~\cite{Raty_unpub} is isomorphic to $Q^d_{4}$.
A computer check confirms that $Q^d_5$ is a $P_k$-induced-saturated graph for $k\in \{12,13,14\}$.
We think $Q^d_n$ is a good candidate to investigate, and put forward a question to encourage research in this direction.

\begin{ques}
For every $n \geq 4$, does there exist a $k$ such that $Q^d_n$ is a $P_k$-induced-saturated graph?
\end{ques}

\section*{Appendix}
\subsection*{The $P_6$-induced-saturated graph given by R\"{a}ty in \cite{Raty_unpub}}

Let $\mathbb{F}_{16}=\mathbb{F}_2(\alpha)/(\alpha^4+\alpha+1)$ be the finite field with 16 elements.
Then the multiplicative group $\mathbb{F}_{16}^{\times}$ is generated by $\alpha$, and  $(\mathbb{F}_{16}^{\times})^3=\{1,\alpha^3, \alpha^3+\alpha,\alpha^3+\alpha^2, \alpha^3+\alpha^2+\alpha+1\}.$
In \cite{Raty_unpub}, R\"{a}ty provided a $P_6$-induced-saturated graph $H$,  defined by
$V(H)=\mathbb{F}_{16}$ and $E(H)=\{ab\mid a+b\in (\mathbb{F}_{16}^{\times})^3 \}$.
One can check that  $H$ is isomorphic to  $G_2$ by Figure~\ref{fig:G2}, which shows the image of an isomorphism from $G_2$ to $H$.

\begin{figure}[h!]
\centering
    \includegraphics[height=3cm]{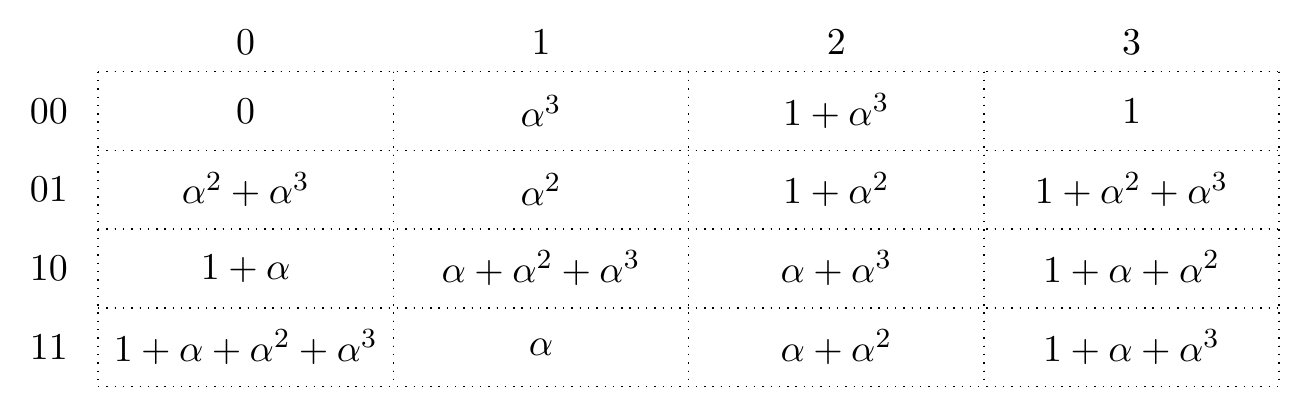}
  \caption{The image of an isomorphism from $G_2$ to $H$.}
  \label{fig:G2}
\end{figure}

\subsection*{Proof of Lemma~\ref{lem:auto}}

It is sufficient to check that each of $f$, $p$, and $q$ moves an edge to an edge.
Fix a vertex $(ab,j)\in V(G_n)$ and let $c=a+b\in\mathbb{Z}_2$.
See Tables~\ref{table:f(N(v))}~and~\ref{table:N(f(v))}.

\begin{table}[h!]\centering
\footnotesize{\begin{tabular}{p{18mm}||p{19mm}|p{19mm}|p{22mm}|p{25mm}|p{30mm}}
\qquad $v$ & $(\bar{a}\bar{b},j)$ & $(a\bar{b},j)$ & $(ac,j-1)$ & $(ac,j+1)$ & $(\bar{a}b,j+2(-1)^a)$ \\   \hline   \hline
  $f(v)$ &  $(\bar{a}\bar{b},j+1)$ &  $(a\bar{b},j+1)$  & $(ac,j)$   & $(ac,j+2)$ &   $(\bar{a}b,j+2(-1)^a+1)$ \\   \hline
  $p(v)$, $j$: even &
  $(a\bar{b},-j-1)$ & $(\bar{a}\bar{b},-j-1)$ & $(\bar{a}\bar{c},-j)$ & $(\bar{a}\bar{c},-j-2)$ & $(ab,-j-2(-1)^a-1)$  \\    \hline
  $p(v)$, $j$: odd & $(ab,-j-1)$ & $(\bar{a}b,-j-1)$ & $(\bar{a}c,-j)$ & $(\bar{a}c,-j-2)$ & $(a\bar{b},-j-2(-1)^a-1)$ \\    \hline
  $q(v)$ & $(\bar{a}c,-j+2\bar{a})$  & $(a\bar{c},-j+2a)$ & $(ab,-j+1+2a)$ & $(ab,-j-1+2a)$ & $(\bar{a}\bar{c},-j-2(-1)^a+2\bar{a})$  \\
\end{tabular}}\caption{The values $f(v)$, $p(v)$, and $q(v)$ for $v\in N_{G_n} (ab,j)$.}\label{table:f(N(v))}
\end{table}

\begin{table}[h!]
\footnotesize{\begin{tabular}{l||l}
$N_{G_n}(f(ab,j))$ &   \multirow{2}{*}{$(\bar{a}\bar{b},j+1), (a\bar{b},j+1),
(ac,j), (ac,j+2), (\bar{a}b,j+1+2(-1)^a)$} \\
\quad  $(=N_{G_n} ( ab,j+1))$ &   \\ \hline
$N_{G_n}(p(ab,j))$, $j:$ even    &
\multirow{2}{*}{$(a\bar{b},-j-1), ( \bar{a}\bar{b},-j-1), ( \bar{a}\bar{c},-j-2), ( \bar{a}\bar{c},-j), (ab,-j-1+2(-1)^{\bar{a}}) $ }
\\
\quad   $(=N_{G_n} ( \bar{a}b,-j-1))$& \\ \hline
$N_{G_n}(p(ab,j))$, $j:$ odd & \multirow{2}{*}{$(ab,-j-1), ( \bar{a}b,-j-1),  (\bar{a}c,-j-2), ( \bar{a}c,-j), ( a\bar{b},-j-1+2(-1)^{\bar{a}}) $}\\
\quad  $(=N_{G_n} ( \bar{a}\bar{b},-j-1))$ & \\ \hline
$N_{G_n}(q(ab,j))$  &
\multirow{2}{*}{$(\bar{a}\bar{c},-j+2a), ( a\bar{c},-j+2a), (ab,-j+2a-1),(ab,-j+2a+1),
( \bar{a}c,-j+2a+2(-1)^a)$} \\
\quad   $(=N_{G_n} ( ac,-j+2a) )$& \\
\end{tabular}}\caption{$N_{G_n}(f(ab,j))$, $N_{G_n}(p(ab,j))$, and $N_{G_n}(q(ab,j))$.}\label{table:N(f(v))}
\end{table}

One can check that the set of the five vertices in the $i$th row of Table~\ref{table:f(N(v))}  is equal to that of the vertices in the $i$th row of Table~\ref{table:N(f(v))}.
Especially when one compares the last rows of the two tables, note that
$-j-2(-1)^a+2\bar{a}=-j+2a$ and $-j+2\bar{a}=-j+2a+2(-1)^a$, which implies that the last vertex of one table matches the first vertex of the other table.

\subsection*{Proof of Lemma~\ref{lem:auto:r}}

For simplicity, let $A=T_0\cup T_1$, where
$T_j=\{(00,j),(01,j),(10,j+1),(11,j+1)\}$ for  $j\in \{0,1\}$.
We define $r:V(G_n)\rightarrow V(G_n)$ by
\[ r(ab,j)=\begin{cases}
q(ab,j)&\text{if }(ab,j)\in A\\
f^2\circ p^{-1}(ab,j)&\text{otherwise.}
\end{cases}\]
It is clear that $r$ is well-defined, and
Figure~\ref{fig:r} shows the image of $r$.
To be precise,
\[ r(ab,j)=\begin{cases}
(ac,-j+2a)&\text{if }(ab,j)\in A,\\
(\bar{a}\bar{b},-j+1)&\text{if  }(ab,j)\not\in A \text{ and } j\text{ is even},\\
(\bar{a}b,-j+1)&\text{if  }(ab,j)\not\in A\text{ and } j\text{ is odd}.\\
\end{cases}\]
We will show that $r$ is an automorphism of $G_n$, which proves Lemma~\ref{lem:auto:r}.

\begin{figure}[h!]
  \centering
  \includegraphics[width=16cm]{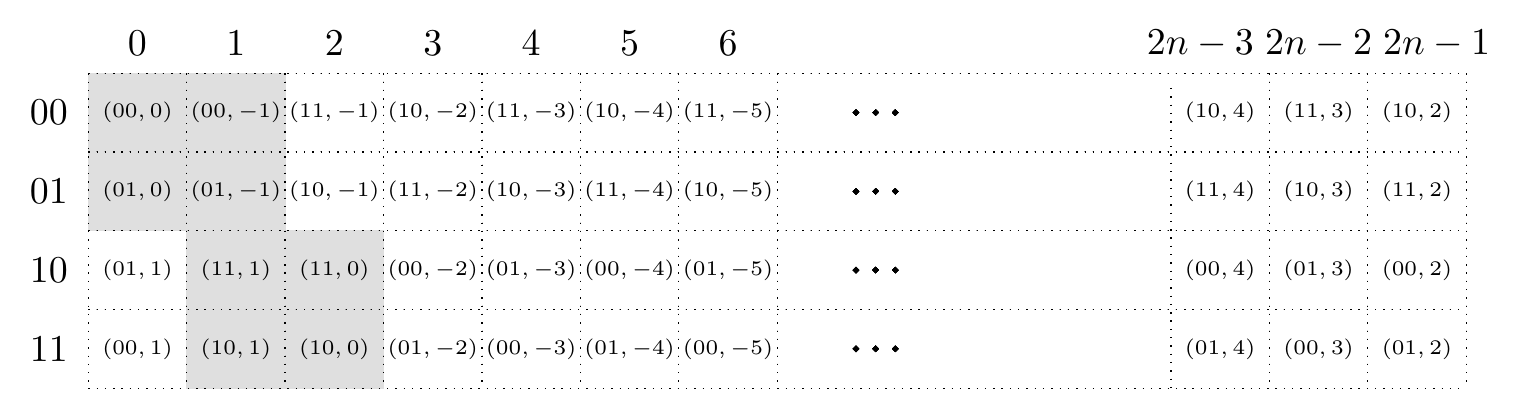}
  \caption{The image of the mapping $r$, where the shaded part shows the image of $A$ under $r$.}
  \label{fig:r}
\end{figure}

We first show that $r$ is a bijection.
It is sufficient to check that if $(ab,j)\neq (a'b',j')$ then $r(ab,j)\neq r(a'b',j')$.
Since we know that $q$ and $f^2\circ p^{-1}$ are bijections, it is enough to check the case where $(a'b',j')\in A$ and $(ab,j)\not\in A$.
Suppose that $r(a'b',j)=r(ab,j)$.
Then, $q(a'b',j)=f^2\circ p^{-1}(ab,j)$, which implies that
$a'=\bar{a}$ and $-j'+2a'=-j+1$.
Suppose that $a=0$. Then $a'=1$ and $j'=j+1$.
Since $(a'b',j')\in A$ and $a'=1$,
$j'\in\{1,2\}$.
Then we have $j\in \{0,1\}$, which is a contradiction to $(ab,j)\not\in A$.
Similarly, for the case where $a=1$, we have $a'=0$ and $j'=j-1$, which also implies that
$j\in \{1,2\}$, a contradiction to $(ab,j)\not\in A$.
Hence, $r$ is an injection.

Now we check that $r$ preserves an edge.
Since $q$ and $f^2\circ p^{-1}$ are automorphisms of $G_n$, it is sufficient to check that $r(v)r(w)$ is an edge for every edge $vw$ of $G_n$, where $v\in A$ and $w\in N_{G_n}(v)\setminus A$. For each vertex $v\in A$, there are two such neighbors $w$ and one can check all cases from Table~\ref{table:r}.
\begin{table}[h!]\centering
\footnotesize{\begin{tabular}{c|l||c|l}
$v\in A$ & \quad $w\in N_{G_n}(v)\setminus A$ \quad & $r(v)=q(v)$ & \quad $r(w)=f^2\circ p^{-1}(w)$ \quad \\ \hline   \hline
$(00,0)$&\quad   $(00,-1)$, $(11,0)$ & $(00,0)$ &\quad  $(10,2)$, $(00,1)$ \\
$(01,0)$ &\quad  $(01,-1)$, $(10,0)$ &
$(01,0)$ &\quad  $(11,2)$, $(01,1)$\\
$(00,1)$ &\quad  $(00,2)$, $(10,3)$ &
$(00,-1)$&\quad  $(11,-1)$,  $(00,-2)$\\
$(01,1)$&\quad $(01,2)$, $(11,3)$    &  $(01,-1)$ &\quad $(10,-1)$, $(01,-2)$    \\
$(10,1)$&\quad $(11,0)$, $(00,-1)$    & $(11,1)$  &\quad $(00,1)$, $(10,2)$    \\
$(11,1)$&\quad $(10,0)$, $(01,-1)$     &  $(10,1)$  &\quad  $(01,1)$, $(11,2)$  \\
$(10,2)$&\quad $(01,2)$, $(11,3)$    & $(11,0)$  &\quad  $(10,-1)$, $(01,-2)$  \\
$(11,2)$&\quad  $(00,2)$, $(10,3)$   & $(10,0)$  &\quad  $(11,-1)$, $(00,-2)$  \\
\end{tabular}}\caption{The values $r(v)$ and $r(w)$  for $v\in A$ and $w\in N_{G_n}(v)\setminus A$.}\label{table:r}
\end{table}

It remains to show that $r(U_n)\subset L_n$, that is, $r(v)\in L_n$ for all $v\in U_n$.
Since $U_n \cap A$ contains a unique vertex $(01,1)$ and $r(01,1)=(01,2n-1)$
as in Figure~\ref{fig:r},  we know that
$r(v)\in L_n$ for $v\in U_n \cap A$.
For a vertex $v\in U_n \setminus A$,
say $v=(0b,j)$ for some $b\in\mathbb{Z}_2$ and $j\in \mathbb{Z}_{2n}$,
by the definition of $r$ we have $r(0b,j)=(1b',-j+1)$ for some $b'\in \mathbb{Z}_2$, and so $r(0b,j)\in L_n$.
This completes the proof.

\end{document}